\newtheorem{theorem}{Theorem}[section]
\newtheorem{corollary}[theorem]{Corollary}
\newtheorem{proposition}[theorem]{Proposition}
\theoremstyle{definition}
\numberwithin{equation}{section}
\DeclareMathOperator{\tr}{trace}
\newcommand{\C}{{\mathbb C}}
\newcommand{\T}{{\mathbb T}}
\renewcommand{\kappa}{\varkappa}
\newcommand{\be}{\begin{equation}}
\newcommand{\ee}{\end{equation}}
\newcommand{\bq}{\begin{eqnarray}}
\newcommand{\eq}{\end{eqnarray}}
\newcommand{\nn}{\nonumber}
\newcommand{\ba}{\begin{array}}
\newcommand{\ea}{\end{array}}
\newcommand{\iv}{^{-1}}
\newcommand{\iy}{\infty}
\newcommand{\cL}{\mathcal{L}}
\newcommand{\cS}{\mathcal{S}}
\newcommand{\B}{\mathcal{B}}
\newcommand{\cC}{\mathcal{C}}
\begin{document}

\title[Asymptotics of determinants for structured matrices]{A survey of asymptotics of determinants for structured matrices}

\author[E. Basor]{Estelle Basor}
\address{American Institute of Mathematics, Pasadena, CA-91125, USA
}
\email{ebasor@aimath.org}

\author[T. Ehrhardt]{Torsten Ehrhardt     }
\address{Department of Mathematics, University of California, Santa Cruz, CA-95064,  USA}
\email{tehrhard@ucsc.edu}

\author[J. Virtanen]{Jani Virtanen}
\address{Department of Mathematics, University of Reading, England}
\email{j.a.virtanen@reading.ac.uk}
\thanks{The third author was supported in part by EPSRC grant EP/Y008375/1.}

\begin{abstract} 
 In this survey we show how to produce asymptotics of determinants of structured matrices using operator theory methods. We describe the asymptotics for  finite Toeplitz matrices, finite Toeplitz plus Hankel matrices and generalizations of these, and also finite sections of functions of Toeplitz operators, all with smooth matrix-valued symbols. Many of these results are well-known. However, the results for certain Toeplitz plus Hankel operators with matrix-valued symbol are presented for the first time and the result for the finite sections of functions of Toeplitz operators is new.
 \end{abstract}

\maketitle

\section{introduction}
The goal of this survey is to show how using operator theory along with a fact about triangular matrices can produce asymptotics for determinants of structured matrices. We will describe results for finite Toeplitz matrices, finite Toeplitz plus Hankel matrices and also finite sections of functions of Toeplitz operators, all with smooth matrix-valued symbols. The section on Toeplitz plus Hankel matrices for matrix-valued symbols is implicit in previous work \cite{BE07}, but stated here for the first time. The result for the finite sections of functions of Toeplitz operators is new. Each section of the paper will deal with one of the three above mentioned cases. 

This survey is not meant to describe results with the best possible conditions or assumptions, but rather to show how easily one can produce the results using operator theory. There is a long history of using these methods. The first use was in the landmark papers by Widom \cite{Wi75, Wi76} which used operator theory methods to extend the Szeg\H{o} theorem to the matrix case. The methods were then again used for singular symbols in \cite{Ba79} and \cite{Bo82}. After this, it seemed a whole industry of using operator methods to deal with more complicated singular symbols as well as other classes of operators was created. Much of the history can be found at the end of the chapters in \cite{BS06}. Also, while the present work deals with smooth symbols only, we note that Toeplitz determinants with matrix symbols possessing jumps were recently considered in  \cite{BEV}.

Here is the idea that first appeared in \cite{BH} that makes everything work. Suppose we have bounded operators $U$ and $L$ defined on a Hilbert space along with a sequence of  finite-dimensional orthogonal projections $P_{n}$ that converge strongly to the identity and also satisfy
\[ P_{n}L = P_{n}LP_{n} \,\,\,\,\mbox{and}\,\,\,\, UP_{n} = P_{n}UP_{n}.\] We assume both $L$ and $U$ are invertible. If we want to compute the determinant of an operator $P_{n}AP_{n},$ we notice
\[ P_{n}\,L\,L^{-1}\,A\,U^{-1} \,U\,P_{n} = P_{n}\,L\,P_{n}\,L^{-1}\,A\,U^{-1}\, P_{n}\,U\,P_{n} .\] Then if in addition we know that the operator $L^{-1}\,A\,U^{-1}$ is of the form $I + K$ where $K$ trace class, we have
\[ \det P_{n}\,A\,P_{n} \sim \det P_{n}\,L\,P_{n} \cdot \det P_{n}\,U\,P_{n} \cdot \det L^{-1}\,A\,U^{-1}.\] 
Of course one needs to be able to compute $\det P_{n}\,L\,P_{n} \det P_{n}\,U\,P_{n}.$  In all three of our above cases, this is easy to do and the asymptotic formula then just falls out. 

Sometimes even more can be said and an exact identity for determinants can be produced. Using the notation from above, we have
\[ \det P_{n} \,L^{-1}\,A\,U^{-1}\, P_{n} = \det L^{-1}\,A\,U^{-1} \cdot\det Q_{n} U A^{-1} L Q_{n} \] where
$Q_{n} = I - P_{n}.$ This follows directly form the Jacobi identity that says if an operator $B = I + K$ where $K$ is trace class, then for any projection $P = I - Q,$
\[  \det P \,B\, P = \det B \cdot \det Q B^{-1} Q. \]
Note in the above, the determinants are restricted to the images of the projections. 

Thus if one can find the inverse of $A$ easily and then write $U \,A^{-1} \,L $ in some useful way, one has not only an asymptotic formula for the determinants, but an exact expression. In the Toeplitz case, this leads to the identity known as the Borodin-Okounkov-Case-Geronimo Identity (BOCG). More will be said about this in the next section of the paper and later about generalizations of the identity.

Although the projections we will consider are finite dimensional they need not be. Thus this idea also works for Wiener-Hopf determinants and in other situations as well. A summary of the  Wiener-Hopf case results can be found in \cite{BS06}.

\section{Finite Toeplitz Matrices}
We begin with a $N \times N$ matrix-valued function $\phi$ defined on the unit circle $\T=\{z\in\C\,:\, |z|=1\}$ with Fourier coefficients
$$\phi_{k} = \frac{1}{2\pi} \int_{0}^{2\pi} \phi(e^{i\theta})\,e^{-ik\theta}\,\,d\theta ,$$
i.e., 
$$ \phi(e^{i\theta}) = \sum_{k=-\infty}^{\infty} \phi_{k} \,e^{ik\theta} =  \sum_{k=-\infty}^{\infty} \phi_{k} \,z^{k},$$
($z = e^{i\theta} $) 
and consider the matrix 
$$
T_n(\phi)\, = \, (\phi_{j-k})_{j,\,k\, = \,0,\, \dots,\, n-1}.
$$
The matrix $T_{n}(\phi)$ is referred to as the finite Toeplitz matrix with symbol $\phi.$ 
It has the form 
\[ \left[\begin{array}{ccccc}\phi_{0} & \phi_{-1} & \phi_{-2} &  \cdots&  \phi_{-(n-1)} \\
\phi_{1} & \phi_{0} & \phi_{-1} &  \cdots  & \phi_{-(n-2)} \\
\phi_{2} & \phi_{1} & \phi_{0} & \cdots  & \phi_{-(n-3)} \\
\cdots & \cdots & \cdots & \cdots  & \cdots \\
\cdots & \cdots & \cdots&\cdots & \cdots \\
\phi_{n-1} & \phi_{n-2} & \phi_{n-3} & \cdots & \phi_{0}
\end{array}\right].
\]
In the block case ($N\ge2$), each entry $\phi_{k}$ is an $N \times N$ matrix and $T_{n}(\phi)$ is of size $nN \times nN.$

Since the 1950s there has been considerable interest in the asymptotics of the determinants of Toeplitz matrices as $n \rightarrow \infty.$ The interest was motivated by the study of the two-dimensional Ising model correlations, but other important connections are related to many other areas of applied mathematics, such as entanglement computations in statistical physics or problems in random matrix theory, see for example \cite{GIKMV, IJK, IMM, JK} for some applications and \cite{BS99} and \cite{BS06} for more history and general facts about Toeplitz operators.

For finite Toeplitz matrices with well behaved symbols, the most well-known classical result is the Szeg\H{o}-Widom Limit Theorem which states that if the matrix valued symbol
$\phi$ defined on the unit 
circle $\T$ is such that $\det \phi$ has a sufficiently smooth logarithm then the determinant of the block Toeplitz
matrix $T_n(\phi)$  has the asymptotic behavior
$$  D_{n}(\phi) = \det T_n(\phi)  \, \sim G(\phi)^n\,E(\phi)\ \ \ {\rm as}\ n\to\iy.$$

Here are the constants:
$$G(\phi) = e^{(\log \det \phi)_0}$$
and
\begin{align*}
 E(\phi)  = \det \left( T(\phi)T(\phi^{-1}) \right)
\end{align*}
where
\begin{align}\label{Toe}
T(\phi) = (\phi_{j-k}),\,\,\,\,\,\,\,\,0 \leq j, k < \iy 
\end{align}
is the Toeplitz operator defined on $(H^{2})^{N} = H^{2} \oplus \cdots \oplus H^{2}$ where $H^{2}$ is the Hardy space of the circle. We note that the constant $E(\phi)$ was first discovered by Szeg\H{o} in the scalar case ($N =1$), although not in the form given above, and then extended by Widom to the matrix-valued case \cite{Wi74, Wi75, Wi76}.

To make sense of the term $\det \left( T(\phi)T(\phi^{-1}) \right)$ we should note that
we can always define the determinant of an operator of the form 
\[ I + K\]
where $K$ is a trace class operator defined on a Hilbert space. 
Such operators $K$ are compact with discrete eigenvalues $\lambda_{i}$ that satisfy 
\[ \sum_{i=0}^{\infty}|\lambda_{i}| < \infty \] and thus
\[ \det (I + K) = \prod_{i=0}^{\infty} (1+ \lambda_{i}) \]
is well defined.

To state the smoothness assumptions more precisely, 
let $\B$ stand for the set of all functions $\phi\in (L^1(\T))^{N\times N}$ such that the Fourier coefficients satisfy

 \[ \| \phi\|_{\B} :=\sum_{k =-\iy} ^{\iy} |\phi_{k}| + 
 \Big(\sum_{k = -\iy}^{\iy} |k|\cdot|\phi_{k}|^{2}\Big)^{1/2} < \iy. \]

With the norm (any matrix norm will do for the matrix Fourier coefficients),  and pointwise defined algebraic operations on $\T$, the set $\B$ becomes a Banach algebra of continuous functions on the unit circle.

The Szeg\H{o}-Widom Limit Theorem  holds provided that
$\phi\in \B$ and the function $\det \phi$ does  not vanish on $\T$ and has winding number zero.
It is known that this last requirement is equivalent to both $T(\phi)$ and $T(\phi^{-1})$ being Fredholm operators with 
index zero.
  
We will prove the Szeg\H{o}-Widom result by first proving an identity for the determinants, as we outlined in the introduction. This is the identity called the Borodin-Okounkov-Case-Geronimo (BOCG) identity. As we will see, the identity requires the invertibility of $T(\phi)$ and $T(\phi^{-1})$. Thus it does not yield quite as general a proof, but nonetheless it is an extremely insightful and useful identity.

To explain this identity, in addition to the Toeplitz operator, we also define a Hankel operator
\begin{align}
 H(\phi) &=  (\phi_{j+k+1}), \,\,\,\,\,\,\,\,0 \leq j,k < \infty.
 \label{Han}
\end{align}
For $\phi, \psi \in L^{\infty}(\mathbb{T})^{N \times N}$ the identities
\begin{align} \label{Tab}
T(\phi \psi) & = T(\phi)T(\psi) +H(\phi)H({\tilde \psi})\\[.5ex]
H(\phi \psi) & =T(\phi)H(\psi) +H(\phi)T({\tilde \psi})
 \end{align}
are well-known. Here, and in what follows,
$$
\tilde{\phi}( e^{i\,\theta}) = \phi (e^{-i\,\theta}).
$$
It follows from these identities that if  $\psi_{-}$ and $\psi_{+} $ have the property that all their Fourier coefficients vanish for $k > 0$ and $k < 0$, respectively, then 
\begin{align}\label{talpro}
T(\psi_{-} \phi \psi_{+} ) &= T(\psi_{-} )T(\phi) T( \psi_{+} ) ,
\\[.5ex]
H(\psi_{-} \phi \tilde{\psi}_{+} ) &= T(\psi_{-} )H(\phi) T( \psi_{+} ).
\end{align}

Suppose now we have a bounded and invertible matrix symbol $\phi$ such that 
\begin{align}\label{factors:uv}
\phi(z) & = u_{-}(z)u_{+}(z) = v_{+}(z)v_{-}(z),\qquad z\in\T,
\end{align}
and where $u_{+}, u_{+}^{-1}, v_{+}, v_{+}^{-1}$ and $u_{-}, u_{-}^{-1}, v_{-}, v_{-}^{-1}$ have  
the property that all their Fourier coefficients vanish for $k > 0$ and $k < 0$, respectively. The above factorizations are called the (left/right) canonical Wiener-Hopf factorizations of $\phi$. 
In the block case ($N\ge2$) the left and right factorizations may be different. For general symbols
it is known that $T(\phi)$ (resp.~$T(\phi^{-1})$) is invertible if and only if $\phi$ has a left (resp.,\ right) factorization with some additional (rather complicated) conditions on the factors (see \cite{CG, LS} for more information and general factorization theory).

In the case of invertible matrix symbols $\phi\in\B$, the situation is slightly simpler.
The left/right canonical factorization of $\phi$ exists if and only if $T(\phi)$, resp.~$T(\phi^{-1})$, is invertible
and in this case the factors and their inverses belong to $\B$ as well (see \cite[Sect.~10.3]{BS06}).
It is a cruciual observation that the inverses of the Toeplitz operators can then be written as
\begin{align}\label{Tinv}
T(\phi)^{-1}=T(u_+^{-1})T(u_{-}^{-1})\quad\mbox{ and }\quad 
T(\phi^{-1})^{-1}=T(v_+)T(v_{-}).
\end{align}
Indeed, the algebra property (\ref{talpro}) and  the factorization of $\phi$ shows that 
\[T(\phi) = 
T(u_-u_+) = T(u_{-}) \,T(u_{+}) ,\]
\[
T(\phi^{-1})=T(v_-^{-1}v_+^{-1})=T(v_-^{-1})T(v_+^{-1})
\]
and 
\[ T(v_{+}^{-1})T( v_{+}) =   T(v_{+})T( v_{+}^{-1}) = I\]
\[ T(v_{-}^{-1})T( v_{-})  =   T(v_{-})T( v_{-}^{-1}) = I\]
and a corresponding identity for $u_+$ and $u_-$. This verifies \eqref{Tinv}.

Now we are prepared to prove the BOCG identity.

\begin{theorem}
Suppose $\phi\in\B$ is invertible and in addition, the operators $T(\phi)$ and $T(\phi^{-1})$ are invertible.  Then the BOCG identity reads
\begin{eqnarray*}\label{BOCG}
\det T_n(\phi) &=& G(\phi)^n \,\,E(\phi) \cdot 
\det\left(I-H(z^{-n}v_{-}u_{+}^{-1})H(\tilde{u}_{-}^{-1} \tilde{v}_{+}z^{-n})\right),
\end{eqnarray*}
where the factors are given by \eqref{factors:uv}.
\end{theorem}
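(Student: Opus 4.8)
The plan is to apply the operator-theoretic template from the introduction with $A=T(\phi)$, taking the two triangular factors from the \emph{second} factorization $\phi=v_+v_-$. Let $P_n$ be the orthogonal projection onto the span of $e_0,\dots,e_{n-1}$, put $Q_n=I-P_n$, and set $L=T(v_+)$ and $U=T(v_-)$. Since $v_+$ is analytic, $L$ is lower triangular and $P_nL=P_nLP_n$; since $v_-$ is anti-analytic, $U$ is upper triangular and $UP_n=P_nUP_n$; both are invertible with $L^{-1}=T(v_+^{-1})$ and $U^{-1}=T(v_-^{-1})$. The algebraic identity of the introduction then yields the exact finite-dimensional factorization
\[
\det T_n(\phi)=\det(P_nLP_n)\,\det(P_nBP_n)\,\det(P_nUP_n),\qquad B=L^{-1}AU^{-1}=T(v_+^{-1})T(\phi)T(v_-^{-1}).
\]
Because $P_nLP_n$ and $P_nUP_n$ are triangular Toeplitz blocks with constant diagonals $(v_+)_0$ and $(v_-)_0$, the outer two determinants equal $(\det(v_+)_0)^n(\det(v_-)_0)^n=G(\phi)^n$, using $\det\phi=\det v_+\,\det v_-$ and the definition of $G$.

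It then remains to identify $\det(P_nBP_n)$ with $E(\phi)$ times the Hankel determinant. First I would check that $B=I+K$ with $K$ trace class: the first identity in \eqref{Tab} gives $T(\phi)T(v_-^{-1})=T(\phi v_-^{-1})-H(\phi)H(\widetilde{v_-^{-1}})=T(v_+)-H(\phi)H(\widetilde{v_-^{-1}})$, so that $B=I-T(v_+^{-1})H(\phi)H(\widetilde{v_-^{-1}})$, and the two Hankel operators are Hilbert--Schmidt because $\phi\in\B$, whence their product is trace class. The Jacobi identity of the introduction then splits $\det(P_nBP_n)=\det B\cdot\det(Q_nB^{-1}Q_n)$. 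For the first factor I would apply cyclic invariance $\det(I-XY)=\det(I-YX)$ together with the relation $H(\widetilde{v_-^{-1}})T(v_+^{-1})=H(\widetilde{v_-^{-1}v_+^{-1}})=H(\widetilde{\phi^{-1}})$ (from the second identity in \eqref{Tab} and $H(\widetilde{v_+^{-1}})=0$), obtaining $\det B=\det(I-H(\phi)H(\widetilde{\phi^{-1}}))=\det(T(\phi)T(\phi^{-1}))=E(\phi)$.

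For the second factor the \emph{first} factorization enters through $T(\phi)^{-1}=T(u_+^{-1})T(u_-^{-1})$. Writing $B^{-1}=UA^{-1}L=T(v_-)T(\phi)^{-1}T(v_+)$ and collapsing with the vanishing Hankels $H(v_-)=H(u_-^{-1})=0$, I expect $B^{-1}=T(v_-u_+^{-1})T(u_-^{-1}v_+)$; then \eqref{Tab} and the cancellation $(v_-u_+^{-1})(u_-^{-1}v_+)=v_-\phi^{-1}v_+=I$ produce the clean form $B^{-1}=I-H(v_-u_+^{-1})H(\tilde u_-^{-1}\tilde v_+)$. Finally, conjugating by the shift $V=T(z)$ through $Q_n=V^n(V^*)^n$ and the elementary identities $(V^*)^nH(a)=H(z^{-n}a)$ and $H(\tilde b)V^n=H(\tilde b\,z^{-n})$ turns $\det(Q_nB^{-1}Q_n)$ into $\det(I-H(z^{-n}v_-u_+^{-1})H(\tilde u_-^{-1}\tilde v_+z^{-n}))$, and assembling the three pieces gives the claimed identity. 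The main obstacle is precisely the bookkeeping of these last two steps: one must deploy the two factorizations for different purposes ($v_\pm$ for the triangular factors and for $\det B$, $u_\pm$ through $T(\phi)^{-1}$ for $B^{-1}$) and repeatedly invoke the Toeplitz--Hankel calculus and the vanishing of Hankel operators with one-sided symbols so that every intermediate operator remains of the form $I+\text{trace class}$; the two simplifications $H(\widetilde{v_-^{-1}})T(v_+^{-1})=H(\widetilde{\phi^{-1}})$ and $v_-\phi^{-1}v_+=I$ are where the non-commutativity of the matrix case must be carefully handled.
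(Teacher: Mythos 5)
Your proposal is correct and follows essentially the same route as the paper's own proof: the same triangular factors $L=T(v_+)$, $U=T(v_-)$, the Jacobi identity applied to $B=T(v_+^{-1})T(\phi)T(v_-^{-1})$, the inverse $B^{-1}=T(v_-u_+^{-1})T(u_-^{-1}v_+)=I-H(v_-u_+^{-1})H(\tilde u_-^{-1}\tilde v_+)$ obtained through $T(\phi)^{-1}=T(u_+^{-1})T(u_-^{-1})$, and the final collapse of $\det(Q_nB^{-1}Q_n)$ via $Q_n=T(z^n)T(z^{-n})$ and the Hankel shift identities. The only cosmetic differences are that you identify $\det B=E(\phi)$ by cyclicity of the determinant (using $H(\widetilde{v_-^{-1}})T(v_+^{-1})=H(\widetilde{\phi^{-1}})$) where the paper uses similarity, and you exhibit $B-I$ as trace class by a slightly different application of \eqref{Tab}; both variations are immaterial.
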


\proof
We let 
$P_{n}$ be the projection defined on $(H^{2})^{N}$ that maps 
$$\{x_k\}_{k=0}^\infty \,\,\,\mbox{to}\,\,\,\{x_0,\dots,x_{n-1},0,0, \dots\}
$$ identifying a function in $(H^{2})^{N}$ with its Fourier coefficients (in $\C^N$), and we define $Q_{n} = I - P_{n}.$
Identifying $z^{\pm n}=z^{\pm n }I_N$, it is easy to check that $$Q_{n} = T(z^{n})T(z^{-n}), \,\,\,I =  T(z^{-n})T(z^{n}).$$

In what follows and for the rest of the paper, determinants are always defined on the appropriate space. Thus for example, 
$\det P_{n} A P_{n}$ is the determinant defined on the image of $P_{n}.$ 

Next we make some simple observations:

1) If $\phi_{k} = 0$ for all $k < 0$ or for all $k > 0, $ then the Toeplitz matrices are triangular and $D_{n}(\phi) = \det T_{n}(\phi) = (\det \phi_{0})^{n}.$

2) $T_{n}(\phi) = P_{n}T(\phi)P_{n}$, i.e., $T_{n}(\phi)$ is the upper left corner of the Toeplitz operator $T(\phi).$

The projection $P_{n}$ has the nice property mentioned in our introduction that if
$U$ is an operator whose matrix representation has an upper triangular form, then
\[P_{n} U P_{n} = U P_{n}.\] 
And if $L$ is an operator whose matrix representation has a lower triangular form, then
\[ P_{n} L P_{n} =  P_{n} L. \]
  
We let $\phi = u_{-}u_{+} = v_{+}v_{-}.$   
Observing that $T(v_+)$ is of lower and $T(v_-)$ is of upper triangular form, we now write
\begin{align*}
P_{n} T(\phi) P_{n} 
&= P_{n} T(v_{+}) T(v_{+}^{-1})T(\phi) T(v_{-}^{-1}) T(v_{-})P_{n} \\
  &= P_{n} T(v_{+}) P_{n}T(v_{+}^{-1})T(\phi) T(v_{-}^{-1}) P_{n}T(v_{-})P_{n}.
\end{align*}
Taking determinants, we have $$\det P_{n} T(v_{+}) P_{n} = (\det ((v_{+})_{0}))^{n}$$ and $$\det P_{n}T(v_{-}) P_{n} = (\det ((v_{-})_{0}))^{n}.$$
 So at this point we have
 \[ D_{n}(\phi) = G(\phi)^{n} \det  P_{n}T(v_{+}^{-1})
 T(\phi)T(v_{-}^{-1}) P_{n} .\]
   
The term 
\[ P_{n} \,T(v_{+}^{-1}) T(\phi)  
T(v_{-}^{-1})\, P_{n} \] 
is of the form 
\[ P_{n} (I + K) P_{n}\]
where $K$ is trace class. To see this we note that an operator is Hilbert-Schmidt if, in matrix form, its entries satisfy
\[ \sum_{i,j \geq 0} |a_{i,j}|^{2} < \infty ,\] and that it is well-known that the product of two Hilbert-Schmidt operators is trace class. A good reference for this fact and general properties of trace class and Hilbert-Schmidt operators is \cite{GK}. Hankel operators with symbols taken from $\B$ are Hilbert-Schmidt because of the norm condition.

Indeed, we can write the operator $A=T(v_{+}^{-1})T(\phi) T(v_{-}^{-1})$ as
\begin{align*}
A  &= T(v_{+}^{-1})T(\phi) T(v_{-}^{-1})  T(v_+^{-1})  T(v_+)\\
 &= T(v_{+}^{-1})T(\phi)T(v_{-}^{-1} v_+^{-1}) T(v_+) \\
 &= T(v_{+}^{-1})T(\phi)T(\phi^{-1}) T(v_+)\\
 &= T(v_+^{-1}) \Big(T(I_N)-H(\phi)H(\tilde{\phi}^{-1})\Big)T(v_+)\\
 &= I-   T(v_+^{-1}) H(\phi)H(\tilde{\phi}^{-1})T(v_+)
 \end{align*}
 using \eqref{Tab}
with the product of the Hankel operators being trace class. This implies that 
the operator determinant 
$$
\det A= \det T(\phi)T(\phi^{-1})=\det\big(I-H(\phi)H(\tilde{\phi}^{-1}))=E(\phi)
$$
is well-defined.

The operator 
$$
A=T(v_{+}^{-1})T(\phi) T(v_{-}^{-1})=T(v_+)T(u_-)T(u_+)T(v_{-}^{-1})
$$
is invertible and the inverse is
\begin{align*}
A^{-1} &= T(v_-) T(u_+^{-1})T(u_-^{-1})  T(v_+)
\\
&= T(v_-u_+^{-1})T(u_{-}^{-1}v_+),
\end{align*}
and it is also of the form identity plus trace class operator,
\begin{align*}
A^{-1} &=I-H(v_-u_+^{-1}) H(\tilde{u}_-^{-1}\tilde{v}_+),
\end{align*}
since $v_-u_+^{-1}u_{-}^{-1}v_+=I_N$ and the Hankel operators are Hilbert-Schmidt.

To obtain the next step we use the Jacobi identity 
\[ \det P_{n}A P_{n} = \det A\cdot\det (Q_{n}A^{-1} Q_{n})\]
in which $Q_{n}=I-P_{n}$ and  which is true whenever $A$ is an invertible operator of the form identity plus trace class.

This holds because
\begin{align*}
\det P_{n}A P_{n} &= \det (Q_{n}+P_{n}A P_{n})=\det (Q_{n}+A P_{n})\\
&= \det A\cdot \det (A^{-1} Q_{n}+P_{n})=
\det A\cdot\det (Q_{n}A^{-1} Q_{n}+P_n).\\
&=
\det A\cdot\det (Q_{n}A^{-1} Q_{n}).
\end{align*}

We apply Jacobi's identity to our $A$ and it  follows that
\begin{align*}
\det P_{n}T(v_{+}^{-1})T(\phi) T(v_{-}^{-1})P_{n}
&= E(\phi)\cdot 
\det Q_n A^{-1} Q_n
\end{align*}
where
\begin{flalign*}
 \det Q_{n}A^{-1}Q_{n} 
 & =  \det( P_{n} + Q_{n}(I - H(v_{-}u_{+}^{-1} )H(\tilde{u}_{-}^{-1} \tilde{v}_{+}))Q_{n} )&\\
 & = \det( I - Q_{n}H(v_{-}u_{+}^{-1} )H(\tilde{u}_{-}^{-1} \tilde{v}_{+})Q_{n} )&\\
 & = \det( I - T(z^{n})T(z^{-n})H(v_{-}u_{+}^{-1} )H(\tilde{u}_{-}^{-1} \tilde{v}_{+})T(z^{n})T(z^{-n}) )&\\
 & = \det( I - T(z^{-n}) H(v_{-}u_{+}^{-1} )H(\tilde{u}_{-}^{-1} \tilde{v}_{+})T(z^{n} ))&\\
 & = \det( I - H(z^{-n} v_{-}u_{+}^{-1} )H(\tilde{u}_{-}^{-1} \tilde{v}_{+})z^{-n}) )
\end{flalign*}
and the identity is proved.
\endproof

For other proofs (including two original ones) we refer the reader to \cite{BW, BO, Bo01, GC}.

From this BOCG identity we have an instant proof of the Szeg\"{o}-Widom theorem. Notice that $Q_{n}f$ tends to zero for any fixed $f$ as $n \rightarrow \infty$ and this allows one to say that 
$$Q_{n}\,H(v_{-}u_{+}^{-1})H(\tilde{u}_{-}^{-1} \tilde{v}_{+})\,Q_{n}$$ tends to zero in the trace norm. Thus the determinant
$$ \det ( I  - Q_{n}\,H(v_{-}u_{+}^{-1})H(\tilde{u}_{-}^{-1} \tilde{v}_{+})\,Q_{n}) $$ tends to one and 
$$  D_{n}(\phi) = \det T_n(\phi)  \, \sim G(\phi)^n\,E(\phi)\ \ \ {\rm as}\ n\to\iy.$$
For more details see \cite{BW}.

In the scalar case, the constant $E(\phi)$ has a nice concrete description. We will not give a proof of this here, but refer to \cite{Wi75}.  There it was first shown that 
$$\det T(\phi)T(\phi^{-1}) =  \exp\left(\sum_{k=1}^{\iy}k\,(\log\phi)_k\,(\log\phi)_{-k}\right).$$ We should point out that the formula on the right was known much before the 1975 paper and proved first by Szeg\H{o}. 

In the block case no such description is in general available and the constant may even vanish. However, sometimes more can be said. This follows again from the BOCG identity.

\begin{theorem}
Let $\phi \in \B$ and suppose $T(\phi)$ and $T(\phi^{-1})$ are invertible.  Assume that $\phi_k=0$ for all $k> n$ or that $\phi_{-k}=0$ for all $k> n$. Then
$$E(\phi)=G(\phi)^n \det T_n(\phi\iv).$$
\end{theorem}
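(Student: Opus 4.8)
The plan is to apply the BOCG identity already proved above to the symbol $\phi\iv$ in place of $\phi$ and to show that, under the stated band hypothesis, the resulting correction determinant collapses to $1$. As a preliminary I would record the two identities $G(\phi\iv)=G(\phi)\iv$ and $E(\phi\iv)=E(\phi)$. The first is immediate from $\log\det\phi\iv=-\log\det\phi$. For the second, note that $E(\phi)=\det\big(T(\phi)T(\phi\iv)\big)$ and $E(\phi\iv)=\det\big(T(\phi\iv)T(\phi)\big)$ are determinants of operators of the form $I$ plus a trace class operator (by \eqref{Tab}), and $\det(AB)=\det(BA)$ whenever $AB-I$ and $BA-I$ are trace class, since the nonzero eigenvalues of $AB$ and $BA$ coincide with multiplicity.

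Next, from the canonical factorizations \eqref{factors:uv} of $\phi$ I would read off those of $\phi\iv$: the expressions $\phi\iv=v_-\iv v_+\iv=u_+\iv u_-\iv$ exhibit a left and a right canonical factorization of $\phi\iv$, with factors $v_-\iv,v_+\iv$ and $u_+\iv,u_-\iv$ (all in $\B$). Substituting these into the BOCG identity gives
\[ \det T_n(\phi\iv)=G(\phi)^{-n}\,E(\phi)\cdot\det\Big(I-H(z^{-n}u_-\iv v_+)\,H(\widetilde{v_-u_+\iv}\,z^{-n})\Big). \]
Hence the theorem is equivalent to the vanishing of one of the two Hankel operators appearing in this correction term.

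The crux is therefore a support statement for the factor $g:=u_-\iv v_+$. I would prove that if $\phi_k=0$ for $k>n$, then the Fourier coefficients of $g$ vanish for $k>n$, so that $z^{-n}g$ carries only nonpositive Fourier indices and $H(z^{-n}g)=0$. The mechanism is that the band hypothesis makes the Hankel operator $H(\phi)=(\phi_{j+k+1})$ of \eqref{Han} finite rank: its $(j,k)$ entry vanishes unless $j+k+1\le n$, so $H(\phi)=P_nH(\phi)P_n$. Writing $g=u_-\iv\,\phi\,v_-\iv$ and applying the Hankel identity in \eqref{talpro} (with minus factor $u_-\iv$ and plus factor $\widetilde{v_-\iv}$) yields $H(g)=T(u_-\iv)\,H(\phi)\,T(\widetilde{v_-\iv})$. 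Since $\widetilde{v_-\iv}$ has only nonnegative Fourier indices, $T(\widetilde{v_-\iv})$ is lower triangular, so $P_nT(\widetilde{v_-\iv})$ has vanishing columns of index $\ge n$; combined with $H(\phi)=P_nH(\phi)P_n$ this forces every column of $H(g)$ of index $\ge n$ to be zero, which is precisely $g_k=0$ for $k>n$. This kills the first Hankel and proves the formula in this case. The remaining case $\phi_{-k}=0$ for $k>n$ follows by applying the case just settled to $\widetilde\phi$: that symbol also has vanishing coefficients in positive index beyond $n$, its associated factor equals $\widetilde{v_-u_+\iv}$, and the same argument now forces the second Hankel $H(\widetilde{v_-u_+\iv}\,z^{-n})$ to vanish.

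I expect the main obstacle to be this support/finite-rank step: one must correctly identify the left/right canonical factorizations of $\phi\iv$ and of $\widetilde\phi$, keep the plus/minus and left/right bookkeeping straight, and verify that the Hankel identity of \eqref{talpro} applies with the intended minus and plus factors, so that the triangularity of $T(\widetilde{v_-\iv})$ can be exploited to pin down the support of $g$.
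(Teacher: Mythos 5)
Your opening moves coincide with the paper's: use $E(\phi^{-1})=E(\phi)$, $G(\phi^{-1})=G(\phi)^{-1}$, apply the BOCG identity to $\phi^{-1}$ (your identification of its left/right factorizations $\phi^{-1}=v_-^{-1}v_+^{-1}=u_+^{-1}u_-^{-1}$ is right), and then kill one Hankel factor using the band hypothesis. Your handling of the first case ($\phi_k=0$ for $k>n$) is correct: the decomposition $u_-^{-1}v_+=u_-^{-1}\phi\,v_-^{-1}$, the identity $H(u_-^{-1}\phi v_-^{-1})=T(u_-^{-1})H(\phi)T(\tilde{v}_-^{-1})$ from \eqref{talpro}, and the finite-rank structure $H(\phi)=P_nH(\phi)P_n$ together do pin down the support of $u_-^{-1}v_+$ and force $H(z^{-n}u_-^{-1}v_+)=0$. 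The genuine gap is in the last step, the case $\phi_{-k}=0$ for $k>n$. The factorizations of $\tilde{\phi}$ are $\tilde{\phi}=\tilde{u}_+\tilde{u}_-=\tilde{v}_-\tilde{v}_+$ (the plus/minus roles of the factors swap under the tilde), so applying your settled case to $\tilde{\phi}$ produces the factor $\tilde{u}_+^{-1}\tilde{v}_-=\widetilde{u_+^{-1}v_-}$, \emph{not} $\widetilde{v_-u_+^{-1}}=\tilde{v}_-\tilde{u}_+^{-1}$, which is the symbol actually appearing in the second Hankel. For scalar symbols these coincide, but the theorem is stated for $N\times N$ matrix symbols, where $\tilde{u}_+^{-1}$ and $\tilde{v}_-$ need not commute; the two products are related by conjugation with the plus function $\tilde{v}_-$, which does not preserve the band structure, so knowing the Fourier coefficients of $\widetilde{u_+^{-1}v_-}$ vanish for $k>n$ gives no control over those of $\widetilde{v_-u_+^{-1}}$. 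As written, the second Hankel has not been shown to vanish.

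The repair is a mirror image of your first-case computation rather than a reduction to it: from $\tilde{\phi}=\tilde{v}_+\tilde{v}_-$ one gets $\tilde{v}_-\tilde{u}_+^{-1}=\tilde{v}_+^{-1}\,\tilde{\phi}\,\tilde{u}_+^{-1}$, and since $\tilde{v}_+^{-1}$ is a minus function and $\tilde{u}_+^{-1}=\widetilde{u_+^{-1}}$ with $u_+^{-1}$ a plus function, \eqref{talpro} gives $H(\widetilde{v_-u_+^{-1}}\,z^{-n})=T(\tilde{v}_+^{-1})\,H(\tilde{\phi}z^{-n})\,T(u_+^{-1})$, which vanishes because the $(j,k)$ entry of $H(\tilde{\phi}z^{-n})$ is $\phi_{-(n+j+k+1)}=0$. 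The paper avoids this bookkeeping altogether: using \eqref{talpro} and \eqref{Tinv} it rewrites the whole correction determinant for $\phi^{-1}$ as $\det\bigl(I-H(z^{-n}\phi)\,T^{-1}(\tilde{\phi})\,H(\tilde{\phi}z^{-n})\,T^{-1}(\phi)\bigr)$, after which either half of the hypothesis kills $H(z^{-n}\phi)$ or $H(\tilde{\phi}z^{-n})$ by inspection of the entries, with no support analysis of the Wiener--Hopf factors needed and with both cases treated symmetrically. You correctly anticipated that the left/right and plus/minus bookkeeping would be the main obstacle; it is exactly where the argument breaks in the block case.
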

\begin{proof}
Since $E(\phi) = E(\phi^{-1})$ we apply BOCG to $\phi^{-1}$.
Notice also the BOCG identity can be restated as follows by using the algebra properties \eqref{talpro},
\begin{eqnarray*}
\det T_n(\phi\iv) &=& \frac{E(\phi)}{G(\phi)^n}\cdot 
\det\left(I-H(z^{-n}\phi)T\iv(\tilde{\phi})H(\tilde{\phi} z^{-n})T\iv(\phi)\right).
\end{eqnarray*}
 Suppose $\phi_k=0$ for all $k> n.$ Then the Fourier coefficient of $z^{-n}\phi$ vanishes for $k>n$ and the Hankel operator
 $H(z^{-n}\phi)$ is the zero operator. If $\phi_{-k}=0$ for  $k> n,$ then $H(\tilde{\phi} z^{-n})$ is the zero operator for $k> n$ and the result follows.
\end{proof}

As we mentioned earlier, in the 1976 Widom paper the Szeg\H{o}-Widom result was proved more generally than we have proved it here. Our assumptions on the factorization of our symbol needed for the BOCG identity imply that our Toeplitz operator is invertible. Widom proved the result with only the assumption that the operator was Fredholm with index zero.
 
Now we turn to a different class of structured operators.

\section{Finite Toeplitz-plus-Hankel operators}

We first consider the general problem of finite matrices of the form
\[ P_{n} \big( T(\phi) + H(\psi) \big) P_{n},\] where both $\phi$ and $\psi$ are matrix-valued symbols. 
The matrix representations of the operators are of the form
\[ (\phi_{j-k} + \psi_{j+k +1}),\qquad 0\le j,k<\infty.\] 

Determinants of such operators in some special cases arise naturally in applications. For example, if $\psi = -\phi$ and if $\phi = g\,\tilde{g}$ for appropriately defined $g,$ the determinant of $P_{n}( T(\phi) + H(\psi) )P_{n}$ computes the average of $\det g(U)$ over the subgroup of orthogonal matrices of size $2n$ with determinant $1.$ Other averages over the classical compact groups can be computed using different choices of $\psi$. These averages are important in random matrix theory and number theory \cite{BR, CFS, CS07}.

 We begin with a more general theorem.
\begin{theorem}\label{g.th}
Suppose that $\phi\in\B$ is invertible, the operator $T(\phi^{-1})$ is invertible and 
the operator $K$ is trace class. Then
\[ \det P_{n} (T(\phi) + K)P_{n} \sim G(\phi)^{n} \det \big( T(\phi^{-1})(T(\phi) + K)\big) .\]
\end{theorem}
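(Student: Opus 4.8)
The plan is to reuse the sandwiching scheme of the BOCG proof, but now relying only on the right canonical factorization (invertibility of $T(\phi)$ itself is not needed). Since $\phi\in\B$ is invertible and $T(\phi\iv)$ is invertible, $\phi$ admits a right canonical Wiener--Hopf factorization $\phi=v_+v_-$ with $v_\pm,v_\pm\iv\in\B$, where $T(v_+)$ is lower triangular, $T(v_-)$ is upper triangular, and $T(v_\pm)\iv=T(v_\pm\iv)$ as in \eqref{Tinv}. First I would insert $I=T(v_+)T(v_+\iv)$ on the left and $I=T(v_-\iv)T(v_-)$ on the right of $T(\phi)+K$ and use the triangular commutation relations $P_nT(v_+)=P_nT(v_+)P_n$ and $T(v_-)P_n=P_nT(v_-)P_n$ to split off the triangular factors:
\[
P_n(T(\phi)+K)P_n=P_nT(v_+)P_n\cdot P_nBP_n\cdot P_nT(v_-)P_n,\qquad B:=T(v_+\iv)(T(\phi)+K)T(v_-\iv).
\]
Taking determinants, using $\det P_nT(v_\pm)P_n=(\det(v_\pm)_0)^n$ together with the identity $\det(v_+)_0\cdot\det(v_-)_0=G(\phi)$ (which follows from $\det\phi=\det v_+\det v_-$ and $G(\phi)=e^{(\log\det\phi)_0}$), this yields the exact relation $\det P_n(T(\phi)+K)P_n=G(\phi)^n\det P_nBP_n$.

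The core step is to show that $B$ is a trace-class perturbation of the identity. Writing $B=T(v_+\iv)T(\phi)T(v_-\iv)+T(v_+\iv)KT(v_-\iv)$, the second summand is trace class since $K$ is and the trace-class operators form an ideal. For the first summand I would repeat the computation from the BOCG proof: using $T(v_-\iv)=T(\phi\iv)T(v_+)$ and \eqref{Tab},
\[
T(v_+\iv)T(\phi)T(v_-\iv)=T(v_+\iv)T(\phi)T(\phi\iv)T(v_+)=I-T(v_+\iv)H(\phi)H(\tilde\phi\iv)T(v_+),
\]
which is $I$ plus a trace-class operator because $\phi\in\B$ makes $H(\phi)$ and $H(\tilde\phi\iv)$ Hilbert--Schmidt and hence their product trace class. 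Thus $B=I+(\text{trace class})$ and $\det P_nBP_n$ is well defined. This is the step where the hypotheses $\phi\in\B$ and the existence of the right factorization are genuinely used, and I expect it to be the main obstacle.

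Next I would pass to the limit. Because $P_n\to I$ strongly and $B-I$ is trace class, one has $P_n(B-I)P_n\to B-I$ in the trace norm; since $P_nBP_n+Q_n=I+P_n(B-I)P_n$, continuity of the operator determinant with respect to the trace norm gives $\det P_nBP_n\to\det B$.

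Finally I would identify the limit. From $T(v_-\iv)\iv=T(v_-)$ and the algebra relations \eqref{talpro} one checks $T(\phi\iv)(T(\phi)+K)=T(v_-\iv)\,B\,T(v_-)$, i.e.\ $B$ and $T(\phi\iv)(T(\phi)+K)$ are conjugate by the bounded invertible operator $T(v_-\iv)$. Since $B=I+(\text{trace class})$ and the Fredholm determinant is invariant under such similarities, $\det B=\det\big(T(\phi\iv)(T(\phi)+K)\big)$, the latter being itself $I$ plus trace class by \eqref{Tab}. Combining the steps, $\det P_n(T(\phi)+K)P_n=G(\phi)^n\det P_nBP_n\to G(\phi)^n\det\big(T(\phi\iv)(T(\phi)+K)\big)$, which is the asserted asymptotics.
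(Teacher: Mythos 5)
Your proposal is correct and follows essentially the same route as the paper's own proof: the right canonical factorization $\phi=v_+v_-$, the triangular sandwiching to extract $G(\phi)^n$, the observation that $T(v_+\iv)(T(\phi)+K)T(v_-\iv)$ is identity plus trace class, and the identification of the limit determinant via cyclic/similarity invariance. The only difference is that you spell out details the paper leaves implicit (the identity $\det(v_+)_0\det(v_-)_0=G(\phi)$, the trace-norm convergence $P_n(B-I)P_n\to B-I$, and the conjugation by $T(v_-)$), which is harmless.
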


\begin{proof}
This proof uses the same idea as the proof for BOCG. Because $T(\phi^{-1})$ is invertible we have a Wiener-Hopf factorization $\phi = v_{+}v_{-}$. Now
\[ P_{n} ( T(\phi) + K) P_{n} = P_{n} T(v_{+}) T(v_{+}^{-1}) (T(\phi) + K )T(v_{-}^{-1}) T(v_{-}) P_{n}\]
\[ = P_{n} T(v_{+}) P_{n}T(v_{+}^{-1}) (T(\phi) + K )T(v_{-}^{-1}) P_{n}T(v_{-}) P_{n}, \] and thus we have
\[ \det P_{n} (T(\phi) +K )P_{n} = G(\phi)^{n} \det P_{n }T(v_{+}^{-1}) (T(\phi) + K )T(v_{-}^{-1}) P_{n}.\]
By our assumptions $T(v_{+}^{-1}) (T(\phi) + K )T(v_{-}^{-1})$ is of the form $I + L$ where $L$ is trace class and thus 
\[ \det P_{n }T(v_{+}^{-1}) (T(\phi) +K) T(v_{-}^{-1} )P_{n} \] converges to
\[ \det T(v_{+}^{-1}) (T(\phi) +K )T(v_{-}^{-1}). \]
This evaluates to 
\[ \det T(v_{-}^{-1})T(v_{+}^{-1}) (T(\phi) + K ) = \det T(\phi^{-1})(T(\phi) + K )\]
and concludes the proof.
\end{proof}

In the special case $K=0$, the  previous result comes down to the Szeg\H{o}-Widom theorem.
Taking for $K$ a trace class Hankel operator, the  Toeplitz-plus-Hankel asymptotics is obtained, 
which differs  from the Toeplitz case only in a constant term as can be seen from the following.

\begin{corollary}
Suppose that $\phi\in\B$ is invertible, the operator $T(\phi^{-1})$ is invertible and $H(\psi)$ is trace class. Then
\[ \det P_{n} (T(\phi) + H(\psi))P_{n} \sim G(\phi)^{n} \det \big( T(\phi^{-1})(T(\phi) + H(\psi))\big) .\]
If, in addition, $T(\phi)$ is invertible, then
\[ \det (P_{n} (T(\phi) + H(\psi)) P_{n}) \sim G(\phi)^{n} E(\phi) \det  (I +T(\phi)^{-1} H(\psi) ) .\]
\end{corollary}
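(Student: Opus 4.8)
The plan is to derive both assertions directly from Theorem \ref{g.th}, which has already been proved. For the first displayed formula there is essentially nothing to do: I would apply Theorem \ref{g.th} with the trace class operator $K$ taken to be $H(\psi)$. The standing hypotheses that $\phi\in\B$ is invertible and that $T(\phi^{-1})$ is invertible are precisely those of the theorem, and $H(\psi)$ is trace class by assumption, so the theorem yields
\[ \det P_{n}(T(\phi)+H(\psi))P_{n}\ \sim\ G(\phi)^{n}\,\det\big(T(\phi^{-1})(T(\phi)+H(\psi))\big) \]
with no further argument.

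For the second formula the task is to rewrite the limiting constant under the additional hypothesis that $T(\phi)$ is invertible. First I would factor
\[ T(\phi^{-1})(T(\phi)+H(\psi))=T(\phi^{-1})T(\phi)\,\big(I+T(\phi)^{-1}H(\psi)\big), \]
which is legitimate now that $T(\phi)^{-1}$ exists, since $T(\phi^{-1})T(\phi)\,T(\phi)^{-1}=T(\phi^{-1})$. Both factors on the right are of the form identity plus trace class: by \eqref{Tab} one has $T(\phi^{-1})T(\phi)=I-H(\phi^{-1})H(\tilde{\phi})$, and the product of the two Hilbert-Schmidt Hankel operators is trace class, while $T(\phi)^{-1}H(\psi)$ is trace class because $H(\psi)$ is. Consequently the Fredholm determinant is multiplicative across the factorization,
\[ \det\big(T(\phi^{-1})(T(\phi)+H(\psi))\big)=\det\big(T(\phi^{-1})T(\phi)\big)\cdot\det\big(I+T(\phi)^{-1}H(\psi)\big). \]

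It then remains only to identify the first factor as $E(\phi)$. For this I would invoke the identity $\det(AB)=\det(BA)$, valid whenever both $AB$ and $BA$ are of the form identity plus trace class, with $A=T(\phi^{-1})$ and $B=T(\phi)$; this gives
\[ \det\big(T(\phi^{-1})T(\phi)\big)=\det\big(T(\phi)T(\phi^{-1})\big)=E(\phi), \]
which is the symmetry $E(\phi)=E(\phi^{-1})$ already used in the previous theorem. Combining the last two displays with the first assertion produces the stated asymptotics. I expect the only point requiring genuine care to be the bookkeeping that makes all relevant operators identity-plus-trace-class, so that the Fredholm determinant and its multiplicativity are well defined; once that is in place, the factorization and the commutation $\det(AB)=\det(BA)$ are routine.
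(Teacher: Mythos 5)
Your proposal is correct and is essentially the argument the paper intends: the corollary is stated there without proof as an immediate consequence of Theorem \ref{g.th} with $K=H(\psi)$, which is exactly your first step. Your factorization $T(\phi^{-1})(T(\phi)+H(\psi))=T(\phi^{-1})T(\phi)\bigl(I+T(\phi)^{-1}H(\psi)\bigr)$, together with multiplicativity of the Fredholm determinant and $\det\bigl(T(\phi^{-1})T(\phi)\bigr)=\det\bigl(T(\phi)T(\phi^{-1})\bigr)=E(\phi)$ (which, since $T(\phi^{-1})$ is invertible, follows already from similarity, $T(\phi)T(\phi^{-1})=T(\phi^{-1})^{-1}\bigl[T(\phi^{-1})T(\phi)\bigr]T(\phi^{-1})$), correctly supplies the details the paper leaves implicit for the second formula.
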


While the above result yields a general asymptotic formula, it is not in some sense complete. One would like a BOCG type formula and in the scalar case it would be useful to have more concrete descriptions for the constants.

We begin with identities similar to the BOCG type proved in the previous section for an abstract class of operators generated by matrix-valued symbols. The proofs of these are similar (but not exactly the same) as the one done in the previous section and the proofs and additional details can be found in \cite{BE09} where the scalar cases are done.

Let $\cS$ stand for a unital Banach algebra of functions on the unit circle 
continuously embedded into $L^\iy(\T)^{N \times N}$ with norm $\|\cdot \|_{\cS}$ and with the property that 
$\phi \in \cS$ implies that $\tilde{\phi}\in\cS$ and $P \phi\in \cS$. Moreover, define
\begin{align*}
\cS_- &= \Big\{ \phi\in \cS\;:\; \phi_n=0 \mbox{ for  all } n>0\Big\},\nn\\
\cS_{+}&=\Big\{ \phi\in \cS\;:\; \phi_n=0 \mbox{ for  all } n<0\Big\},\nn\\
\cS_0 &= \Big\{ \phi\in \cS\;:\; \phi=\tilde{\phi} \Big\}.\nn
\end{align*}

In the above $\tilde \phi (e^{i\theta}) = \phi(e^{-i\theta})$
and $P$ is the Riesz projection. 
 
Assume that $M: \phi \in L^\iy(\T)^{N\times N} \mapsto M(\phi)\in \cL((H^{2})^{N})$ (the set of bounded operators on $(H^{2})^{N}$) is a continuous linear map such that:
\begin{enumerate}
\item[(a)]
If $\phi \in \cS$, then $M(\phi)-T(\phi)$ is trace class
and there exists some constant $C$ such that
$$ 
\|M(\phi)-T(\phi)\|_{1} \le C\, \|\phi\|_{\cS}.
$$
\item[(b)]
If $\psi \in \cS_-$, $\phi \in \cS$, $ \gamma \in \cS_0$, then
$$
M(\psi \phi \gamma)=T(\psi)M(\phi) M(\gamma).
$$
\item[(c)]
$M(1)=I$.
\end{enumerate}
Then we say $M$ and $\cS$ are {\em compatible pairs}. Note in the above $\| \cdot \|_{1}$ is the trace norm.  
 
All of the following, which occur sometimes in applications, can be realized as compatible pairs:

\begin{enumerate}
\item[(I)]
$M(\phi) =T(\phi)+H(\phi)$,
\item[(II)]
$M(\phi) =T(\phi)-H(\phi)$,
\item[(III)]
$M(\phi)=T(\phi)-H(z^{-1} \phi)$, 
\item[(IV)]
$M(\phi)=T(\phi)+H( z \,\phi)Q_{1}$.
\end{enumerate}

A Banach algebra that satisfies the compatible pair conditions for all four of the above cases is the set of all $\phi$ such that 
\[ |\phi_{0}| + \|H(\phi)\|_{1}+ \|H(\tilde{\phi})\|_{1} < \infty.\] 
It is known that the trace class condition on the Hankel operators $H(\phi)$ and $H(\tilde{\psi})$ is equivalent to requiring 
that $\phi$ belongs to the Besov class $B_1^1$ (see \cite[Thm.~10.10]{BS06} and the reference therein). 
A more convenient sufficient conditions is to require that the symbol satisfies 
$$
\|\phi\|_{F\ell_2^1}=\left(\sum_{k=-\infty}^\infty (1+|k|)^2|\phi_k|^2\right)<\infty.
$$

In most applications the symbol $\phi$ is even and we restrict ourselves now to that case in what follows. 
 
\begin{theorem}\label{BEf}
Let $M$ and $\cS$ be a compatible pair, with $\phi_{\pm1}$ and $\phi_{\pm}^{-1}$ in $\cS_{\pm}.$ Let $\phi = \phi_{+}\phi_{-}$ and assume that $\phi$ is even.   Then
\[ \det P_{n} M(\phi) P_{n}  = G(\phi)^{n} E_M(\phi) \det (I + Q_{n} \,K \,Q_{n} ),\]
where
\[ E_M(\phi) =  \det T(\phi^{-1}) M(\phi)  \]
and $K = M(\phi_{+}\iv) T(\phi_{+}) - I$ and $K$ is trace class.
\end{theorem}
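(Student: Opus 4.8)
The plan is to mirror the proof of the BOCG identity: peel off the two one-sided factors using the triangularity of their Toeplitz operators, reduce the computation to a single corner determinant $\det P_n X P_n$ for an operator $X$ of the form $I+(\text{trace class})$, and then apply the Jacobi identity. Since $\phi_+,\phi_+^{-1}\in\cS_+$ and $\phi_-,\phi_-^{-1}\in\cS_-$, the factorization $\phi=\phi_+\phi_-$ is a canonical Wiener--Hopf factorization, $T(\phi_+)$ is lower and $T(\phi_-)$ is upper triangular, and $T(\phi_+)T(\phi_+^{-1})=T(\phi_-^{-1})T(\phi_-)=I$ by \eqref{talpro}. I would write
\[
P_n M(\phi)P_n=P_nT(\phi_+)\,\big(T(\phi_+^{-1})M(\phi)T(\phi_-^{-1})\big)\,T(\phi_-)P_n
\]
and push the outer projections inward via $P_nT(\phi_+)=P_nT(\phi_+)P_n$ and $T(\phi_-)P_n=P_nT(\phi_-)P_n$, obtaining $P_nM(\phi)P_n=(P_nT(\phi_+)P_n)(P_nXP_n)(P_nT(\phi_-)P_n)$ with $X=T(\phi_+^{-1})M(\phi)T(\phi_-^{-1})$. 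Taking determinants and using that the diagonal blocks of the triangular Toeplitz matrices are $(\phi_\pm)_0$ gives $\det P_nM(\phi)P_n=(\det(\phi_+)_0)^n(\det(\phi_-)_0)^n\det P_nXP_n=G(\phi)^n\det P_nXP_n$, exactly as in the Szeg\H{o}--Widom reduction.

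The heart of the argument, and the step I expect to be the main obstacle, is to show that $X$ is invertible of the form $I+(\text{trace class})$ with inverse precisely $X^{-1}=I+K$, where $K=M(\phi_+^{-1})T(\phi_+)-I$. This is where axiom (b) and the evenness of $\phi$ must be combined carefully. Setting $\psi=1\in\cS_-$ in (b) yields the product rule $M(\chi\gamma)=M(\chi)M(\gamma)$ whenever $\gamma\in\cS_0$, while setting the middle factor equal to $1$ gives $M(\phi_-)=T(\phi_-)$ for one-sided $\phi_-\in\cS_-$. Since $\phi$ is even, both $\phi$ and $\phi^{-1}$ lie in $\cS_0$, so I can telescope
\begin{align*}
(I+K)X &=M(\phi_+^{-1})\big(T(\phi_+)T(\phi_+^{-1})\big)M(\phi)T(\phi_-^{-1})\\
&=M(\phi_+^{-1})M(\phi)T(\phi_-^{-1})=M(\phi_-)T(\phi_-^{-1})=T(\phi_-)T(\phi_-^{-1})=I,
\end{align*}
using the product rule with $\gamma=\phi\in\cS_0$ and $\phi_+^{-1}\phi=\phi_-$. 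The reverse identity $X(I+K)=I$ is checked the same way: $T(\phi_-^{-1})M(\phi_+^{-1})=M(\phi^{-1})$ by (b), and then $M(\phi)M(\phi^{-1})=M(1)=I$ since $\phi^{-1}\in\cS_0$. That $K$ is trace class follows from axiom (a), since $K=(M(\phi_+^{-1})-T(\phi_+^{-1}))T(\phi_+)$ after using $T(\phi_+^{-1})T(\phi_+)=I$.

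It remains to identify the two factors produced by the Jacobi identity $\det P_nXP_n=\det X\cdot\det Q_nX^{-1}Q_n$. For the constant I would avoid any separate Hankel trace-class bookkeeping by noting the similarity $T(\phi^{-1})M(\phi)=T(\phi_-^{-1})\,X\,T(\phi_-)$ (obtained from $X T(\phi_-)=T(\phi_+^{-1})M(\phi)$ together with $T(\phi^{-1})=T(\phi_-^{-1})T(\phi_+^{-1})$); this simultaneously shows that $T(\phi^{-1})M(\phi)$ is of the form $I+(\text{trace class})$, so $E_M(\phi)$ is well defined, and that $\det X=\det T(\phi^{-1})M(\phi)=E_M(\phi)$ by similarity invariance of the operator determinant. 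For the remaining factor, $X^{-1}=I+K$ gives $Q_nX^{-1}Q_n=Q_n+Q_nKQ_n$, whence $\det Q_nX^{-1}Q_n=\det(I+Q_nKQ_n)$, exactly as in the final display of the BOCG proof. Combining the three evaluations yields $\det P_nM(\phi)P_n=G(\phi)^nE_M(\phi)\det(I+Q_nKQ_n)$, as claimed.
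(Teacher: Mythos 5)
Your proof is correct and takes essentially the same route as the paper's: the same triangular peeling reducing $\det P_nM(\phi)P_n$ to $G(\phi)^n\det P_nXP_n$ with $X=T(\phi_+^{-1})M(\phi)T(\phi_-^{-1})$, the same application of Jacobi's identity, and the same use of axioms (b), (c) and evenness to identify $X^{-1}=I+K$. The only divergence is organizational: you verify $(I+K)X=X(I+K)=I$ directly and then deduce that $X$ is identity-plus-trace-class from $K$ being trace class (obtaining $\det X=E_M(\phi)$ by an explicit similarity), which makes the argument self-contained, whereas the paper first inverts $M(\phi)$ via $M(\phi)^{-1}=M(\phi^{-1})$ and cites Proposition 2.1 of \cite{BE09} for the trace-class statement.
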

 \proof
 We sketch the proof since it follows the one in the previous section. We have
 \[ \det P_{n} M(\phi) P_{n} = G(\phi)^{n} \det P_{n} T(\phi_{+}^{-1}) M(\phi) T(\phi_{-}^{-1}) P_{n} \]
 \[ = G(\phi)^{n} \det T(\phi_{+}^{-1}) M(\phi) T(\phi_{-}^{-1}) \det Q_{n} T(\phi_{-}) M(\phi)^{-1} T(\phi_{+}) Q_{n}.\]
 The determinant $\det T(\phi_{+}^{-1}) M(\phi) T(\phi_{-}^{-1}) $ is defined. This is because \[ T(\phi_{+}^{-1}) M(\phi) T(\phi_{-}^{-1}) = T(\phi_{+}^{-1}) (M(\phi) - T(\phi) + T(\phi)) T(\phi_{-}^{-1}) \]
 and $M(\phi) - T(\phi)$ is assumed to be trace class, and in addition, from Proposition 2.1 of \cite{BE09}, we know that $T(\phi_{+}^{-1})T(\phi) T(\phi_{-}^{-1})$ is of the form $I$ plus trace class. Now since our symbol is even $M(\phi)^{-1} = M(\phi^{-1})$ and thus
 \[ \det Q_{n} T(\phi_{-}) M(\phi)^{-1} T(\phi_{+}) Q_{n} = \det Q_{n} T(\phi_{-}) M(\phi^{-1}) T(\phi_{+}) Q_{n}\]
 \[ = \det Q_{n} T(\phi_{-}) M(\phi_{-}^{-1}\phi_{+}^{-1}) T(\phi_{+}) Q_{n} = \det Q_{n} M(\phi_{+}^{-1}) T(\phi_{+}) Q_{n}\]
 \[ = \det Q_{n} ((M(\phi_{+}^{-1}) - T(\phi_{+}^{-1}) + T(\phi_{+}^{-1}) T(\phi_{+}) Q_{n}\] and the result follows.
 \endproof
 
 For our four cases of interest, our operator $K$ can be expressed in terms of Hankel operators.
 \begin{enumerate}
\item[(I)]
$K = H(\phi_{+}^{-1})T(\phi_{+}) = H(\phi_{+}^{-1}\tilde \phi_{+})$
\item[(II)]
$K =   -H(\phi_{+}^{-1})T(\phi_{+}) = -H(\phi_{+}^{-1}\tilde \phi_{+}).$
\item[(III)]
$K = - H(z^{-1}\phi_{+}^{-1})T(\phi_{+}) =   - H(z^{-1}\phi_{+}^{-1} \tilde{\phi_{+}})$
\item[(IV)]
$K = H(z \phi_+\iv  \tilde{\phi}_+)-T(\phi_+\iv)H(z\tilde{\phi}_+)$
\end{enumerate}
 
While just as in the finite Toeplitz setting, the constant in the matrix case may be hard to explicitly describe, for scalar functions the answer was obtained in \cite{BE09}. We only state the result here again for $\phi$ even.

Suppose $\phi = \exp \beta.$ Then
\[ E(\phi) =  \exp\Big( \tr(M( \beta)-T(\beta)) + \frac{1}{2}\tr\, H(\beta)^2\Big).\] In our special cases, we have that $E(\phi)$ is given by

\begin{enumerate}
\item[(I)]
$\exp\Big( \sum_{k =1}^{\infty} \beta_{2k+1}  +\frac{1}{2}\sum_{k=1}^\iy k \,\beta_{k}^{2}\Big)$
\item[(II)]
$\exp\Big(- \sum_{k =1}^{\infty} \beta_{2k+1}  +\frac{1}{2}\sum_{k=1}^\iy k \,\beta_{k}^{2}\Big)$
\item[(III)]
$\exp\Big(- \sum_{k =1}^{\infty} \beta_{2k}  +\frac{1}{2}\sum_{k=1}^\iy k \, \beta_{k}^{2}\Big)$
\item[(IV)]
$\exp\Big(\sum_{k=1}^\iy \beta_{2k}+\frac{1}{2}\sum_{k=1}^\iy k \, \beta_k^2\Big)$
\end{enumerate}

While the non-even cases may not occur as frequently in applications,  for completeness sake we include the case of non-even matrix valued $\phi$. 

\begin{theorem}
Let $[M,\cS]$ be a compatible pair, and let 
$\phi =v_{+}v_{-}$ with $v_{\pm}, v_{\pm}^{-1} \in \cS_{\pm}$. Moreover, assume that 
\begin{align}\label{anti-fact}
\phi\tilde{\phi}^{-1}=\xi_-\tilde{\xi}_-^{-1}
\end{align}
with $\xi_-,\xi_-^{-1}\in\cS_-$. Then 
\[ \det P_{n} M(\phi) P_{n}  = G(\phi)^{n} E_M(\phi) \det (I + Q_{n} K Q_{n} ),\]
with
\[ E_M(\phi) =   \det  T(\phi^{-1}) M(\phi),\] 
and $K = M(v_+^{-1}\xi_-) T(\xi_-^{-1} v_+) - I$ being trace class.
\end{theorem}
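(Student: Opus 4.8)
The plan is to reproduce the argument of Theorem~\ref{BEf} almost verbatim, the only genuinely new input being a device that replaces the missing evenness of $\phi$ by the anti-factorization \eqref{anti-fact}. First I would peel off the growth factor exactly as in the even case: since $v_\pm,v_\pm^{-1}\in\cS_\pm$, insert $T(v_+)T(v_+^{-1})=I$ on the left and $T(v_-^{-1})T(v_-)=I$ on the right of $P_nM(\phi)P_n$, and use that $T(v_+)$ is lower and $T(v_-)$ is upper triangular to slide projections through these factors. This gives
\[
\det P_n M(\phi) P_n = G(\phi)^n \det P_n\, A\, P_n, \qquad A := T(v_+^{-1}) M(\phi) T(v_-^{-1}).
\]
As before, $A=I+(\text{trace class})$: write $M(\phi)=T(\phi)+(M(\phi)-T(\phi))$ and combine property~(a) with Proposition~2.1 of \cite{BE09} applied to $T(v_+^{-1})T(\phi)T(v_-^{-1})$. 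Cyclically permuting $T(v_-^{-1})$ and using $T(\phi^{-1})=T(v_-^{-1})T(v_+^{-1})$ then identifies $\det A=\det T(\phi^{-1})M(\phi)=E_M(\phi)$.

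The crux is the second step. I would set $\eta:=\xi_-^{-1}\phi$ and observe that \eqref{anti-fact} forces $\eta$ to be \emph{even}: multiplying $\phi\tilde\phi^{-1}=\xi_-\tilde\xi_-^{-1}$ on the left by $\xi_-^{-1}$ and on the right by $\tilde\phi$ gives $\xi_-^{-1}\phi=\tilde\xi_-^{-1}\tilde\phi=\widetilde{\xi_-^{-1}\phi}$, i.e.\ $\eta=\tilde\eta\in\cS_0$. This yields a second factorization $\phi=\xi_-\eta$ with $\xi_-\in\cS_-$ and $\eta\in\cS_0$, which plays the role that evenness of $\phi$ played in Theorem~\ref{BEf}. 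Applying property~(b) with $\psi=\xi_-$, $\gamma=\eta$ gives $M(\phi)=T(\xi_-)M(\eta)$; since $M(\eta)M(\eta^{-1})=M(\eta^{-1})M(\eta)=M(1)=I$ for the even symbol $\eta$ (again by~(b)) and $T(\xi_-)$ is invertible, $M(\phi)$ is invertible with
\[ M(\phi)^{-1}=M(\eta^{-1})T(\xi_-^{-1}). \]

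With $A$ invertible and of the form $I+(\text{trace class})$, the Jacobi identity gives $\det P_nAP_n=\det A\cdot\det Q_nA^{-1}Q_n=E_M(\phi)\det(I+Q_nKQ_n)$, so it remains only to identify $A^{-1}=T(v_-)M(\phi)^{-1}T(v_+)$ with $I+K$. Substituting the formula for $M(\phi)^{-1}$, combining $T(\xi_-^{-1})T(v_+)=T(\xi_-^{-1}v_+)$ (minus times plus, the correct order for \eqref{talpro}), and then absorbing the left factor by property~(b) as $T(v_-)M(\eta^{-1})=M(v_-\eta^{-1})=M(v_+^{-1}\xi_-)$ (using $v_-\eta^{-1}=v_-\phi^{-1}\xi_-=v_+^{-1}\xi_-$), I would arrive at
\[ A^{-1}=M(v_+^{-1}\xi_-)\,T(\xi_-^{-1}v_+)=I+K, \]
which is exactly the $K$ of the statement; its trace-class-ness is automatic since $K=A^{-1}-I$.

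The main obstacle, and the only place the new hypothesis does real work, is the second step: proving that $\eta=\xi_-^{-1}\phi$ is even, and hence that $M(\phi)$ factors as $T(\xi_-)M(\eta)$ and is invertible. Once evenness of $\eta$ is in hand, every remaining manipulation (absorbing triangular Toeplitz factors into $M$ via property~(b), combining plus/minus Toeplitz products, and the Jacobi step) is the same bookkeeping as in Theorem~\ref{BEf}. One must only check at each application of~(b) that the outer factors genuinely lie in $\cS_-$ and $\cS_0$, which they do by construction ($v_-,\xi_-\in\cS_-$ and $\eta,1\in\cS_0$), and that products such as $v_+^{-1}\xi_-$ and $\xi_-^{-1}v_+$ remain in $\cS$, which holds since $\cS$ is an algebra containing all the factors.
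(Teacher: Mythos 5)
Your proposal is correct and follows essentially the same route as the paper's proof: your $\eta=\xi_-^{-1}\phi$ is exactly the paper's $\phi_0$, whose evenness is derived from \eqref{anti-fact} by the same computation, and the factorization $M(\phi)=T(\xi_-)M(\eta)$, the inversion $M(\phi)^{-1}=M(\eta^{-1})T(\xi_-^{-1})$, the identification $A^{-1}=M(v_+^{-1}\xi_-)T(\xi_-^{-1}v_+)$, and the final Jacobi step all match the paper's argument. (Your computation $v_-\eta^{-1}=v_-\phi^{-1}\xi_-=v_+^{-1}\xi_-$ even corrects a small typo in the paper's corresponding line.)
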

\proof
The first steps of this proof are the same as before. We have
\[ \det P_{n} M(\phi) P_{n} = G(\phi)^{n} \det P_{n} T(v_{+}^{-1}) M(\phi) T(v_{-}^{-1}) P_{n} \] 
and consider $A=T(v_+^{-1})M(\phi)T(v_-^{-1})$. We can write it as
$$
A= T(v_-)T(\phi^{-1})M(\phi)T(v_-^{-1})
$$
concluding that it is of the form identity plus trace class. In particular, $\det A=E_M(\phi)$.
To invert $A$ we need to find the inverse of $M(\phi)$. Define $\phi_0=\xi_-^{-1}\phi$ and now \eqref{anti-fact} implies that 
$$
\tilde{\phi}_0=\tilde{\xi}_-^{-1} \tilde{\phi}=\xi_-^{-1}\phi=\phi_0.
$$
Therefore, $\phi=\xi_-\phi_0$ with $\phi_0,\phi_0^{-1}\in\cS_0$ entailing
$$
M(\phi)=T(\xi)M(\phi_0),\qquad M(\phi)^{-1} =M(\phi_0^{-1})T(\xi_-^{-1}).
$$
Here we used condition (b) in the definition of a compatible pair.
The inverse of $A$ exists and is given by
\begin{align*}
A^{-1} &=T(v_-)M(\phi)^{-1}T(v_+)=T(v_-)M(\phi_0^{-1})T(\xi_-^{-1})T(v_+)\\
&=M(v_-\phi_0^{-1})T(\xi_-^{-1}v_+)
=M(v_+^{-1}\xi_-)T(\xi_-^{-1}v_+)
\end{align*}
noting that $v_-\phi_0^{-1}=v_+\phi\phi_0^{-1}=v_+^{-1}\xi$. Jacobi's identity
and
$$
\det Q_n A^{-1} Q_n =\det( P_n +Q_nA^{-1}Q_n)=\det(I+ Q_n K Q_n)
$$
with $K$ as above concludes the proof.
\endproof

The factorization \eqref{anti-fact} of the matrix function $\phi\tilde{\phi}^{-1}$ is also a 
canonical Wiener-Hopf factorization because $\tilde{\xi}_-,\tilde{\xi}_-^{-1}\in\cS_+$.
Indeed, under the additional assumption that $\cS$ contains only continuous functions, we can conclude that each such Wiener-Hopf factorization of $\phi\tilde{\phi}^{-1}$ in $\cS$ is of this form: Assuming $\phi\tilde{\phi}^{-1}=\xi_-\xi_+$ implies
$$
\tilde{\xi}_+^{-1}\tilde{\xi}_{-}^{-1}= (\widetilde{\phi\tilde{\phi}^{-1}})^{-1}=\phi\tilde{\phi}^{-1}=\xi_-\xi_+.
$$
Hence $G=\xi_+\tilde{\xi}_-=\xi_-^{-1}\tilde{\xi}_+^{-1}$ with some invertible matrix $G$.
But then $\phi\tilde{\phi}^{-1}=\xi_- G \tilde{\xi}_-^{-1}$. 
This reads, 
$$\phi(z)\phi(z^{-1})^{-1}=\xi_-(z) G \xi_-(z^{-1})^{-1},\qquad z\in\T.$$ 
Continuity at $z=1$ or $z=-1$ implies that 
$G=I_N$, thus $\phi\tilde{\phi}^{-1}=\xi_- \tilde{\xi}_-^{-1}$ which is \eqref{anti-fact}.
We do not know whether the proceeding observation is true for every compatible pair, i.e.,
without the assumption that $\cS$ is contained in the set of continuous matrix functions.

As the proof indicates, the existence of the factorization \eqref{anti-fact} implies
the invertibility of $M(\phi)$. For certain compatible pairs,  the converse may also be true. 
In this connection we remark that the relationship between
factorizations and invertibility of certain Toeplitz-plus-Hankel operators 
$T(\phi)+ H(\phi W)$ (with $W$ a matrix satisfying $W^2=I_N$) has been examined in \cite{E04a} (see also  \cite[Chapt.~4]{E04b}).

\section{Finite sections of functions of Toeplitz operators}

Our goal in this section is to find the asymptotic formula for $\det P_{n}\, f(T(\phi) )\,P_{n}$ where $f$ is some appropriately defined function of the operator $T(\phi).$ This was motivated by a question to the authors from Maurice Duits in relation to some linear statistics problems which occur in random matrix theory. In that case the function $f$ was the standard exponential function. 

 \begin{proposition}
Suppose that $\phi \in \B$ and in addition $f$ is complex-valued and analytic in an open set in the plane containing the spectrum of  $T(\phi )$.
Then  
\[ f(T(\phi) ) - T(f(\phi) )\]
is a trace class operator. 
\end{proposition}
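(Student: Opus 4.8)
The plan is to reduce everything to the case $f(\lambda)=(\lambda-\mu)^{-1}$ via the holomorphic functional calculus, and then show that the resolvent difference $(T(\phi)-\mu)^{-1}-T((\phi-\mu)^{-1})$ is trace class with a suitable uniform bound. The starting observation is the product identity \eqref{Tab}: for any $\psi$ and any $\phi\in\B$ we have $T(\psi)T(\phi)=T(\psi\phi)-H(\psi)H(\tilde\phi)$, and since $\phi\in\B$ makes the Hankel operators $H(\phi),H(\tilde\phi)$ Hilbert--Schmidt, the "commutation defect" $T(\psi\phi)-T(\psi)T(\phi)$ is a product of two Hilbert--Schmidt operators, hence trace class. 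This is exactly the mechanism already used in the Toeplitz section, so the algebra of $T(\cdot)$ modulo trace class is the tool I would lean on.

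First I would fix a contour $\Ga$ (a finite union of positively oriented smooth closed curves) lying in the open set where $f$ is analytic and enclosing the spectrum of $T(\phi)$, and write the functional calculus
\[
f(T(\phi))=\frac{1}{2\pi i}\oint_{\Ga} f(\mu)\,(\mu-T(\phi))^{-1}\,d\mu.
\]
Correspondingly, since $\det\phi$ need not avoid $\Ga$, I cannot directly write $T(f(\phi))$ as a contour integral of $T((\mu-\phi)^{-1})$; instead I would directly compare $f(T(\phi))$ with $T(f(\phi))$ by first establishing the key resolvent estimate. Namely, for $\mu\in\Ga$ the operator $\mu-T(\phi)=T(\mu-\phi)$ is invertible on $(H^2)^N$, and I claim
\[
R(\mu):=(\mu-T(\phi))^{-1}-T\big((\mu-\phi)^{-1}\big)
\]
is trace class with $\|R(\mu)\|_{1}$ bounded uniformly on the compact set $\Ga$. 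To see this, set $g=(\mu-\phi)^{-1}\in\B$ (note $\mu-\phi$ is invertible in the Banach algebra $\B$ because $\mu\notin\det\phi(\T)$ when $\mu\in\Ga$, as $\Ga$ encloses the spectrum of $T(\phi)$ which contains $\phi(\T)$'s essential range). Then $T(\mu-\phi)T(g)=T((\mu-\phi)g)+(\text{trace class})=I+(\text{trace class})$, so $T(g)$ is a right inverse of $T(\mu-\phi)$ modulo trace class, and symmetrically a left inverse; multiplying by the genuine inverse $(\mu-T(\phi))^{-1}$ shows $R(\mu)$ is trace class.

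The main work, and the step I expect to be the obstacle, is the \emph{uniform} trace-norm bound on $R(\mu)$ over $\mu\in\Ga$, so that integrating against the bounded function $f(\mu)$ over the finite-length contour produces a trace class operator. The uniformity requires controlling $\|(\mu-T(\phi))^{-1}\|$ (an operator-norm bound, uniform by continuity of $\mu\mapsto(\mu-T(\phi))^{-1}$ on the compact $\Ga$, which avoids the spectrum) together with the trace-norm bound on the commutation defects $H\big((\mu-\phi)\big)H(\widetilde{g})$ and the like; here I would use continuity of $\mu\mapsto g=(\mu-\phi)^{-1}$ as a map into $\B$ and the trace-class estimate $\|H(a)H(\tilde b)\|_1\le\|H(a)\|_2\|H(\tilde b)\|_2\le C\|a\|_{\B}\|b\|_{\B}$, all of which are continuous in $\mu$ on $\Ga$ and hence uniformly bounded. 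Granting this, I conclude
\[
f(T(\phi))-T(f(\phi))=\frac{1}{2\pi i}\oint_{\Ga} f(\mu)\,R(\mu)\,d\mu,
\]
an integral of a continuous trace-class-valued function over a compact contour, hence itself trace class, which is the desired statement. The delicate point to verify carefully is that $T(f(\phi))$ genuinely equals $\frac{1}{2\pi i}\oint_{\Ga}f(\mu)T((\mu-\phi)^{-1})\,d\mu$; this follows from interchanging the $T(\cdot)$ operation with the (norm-convergent in $\B$) contour integral, using that $f(\phi)(z)=\frac{1}{2\pi i}\oint_{\Ga}f(\mu)(\mu-\phi(z))^{-1}d\mu$ pointwise and that $\mu\mapsto(\mu-\phi)^{-1}$ is continuous into $\B$.
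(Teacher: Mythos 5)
Your proposal is correct and follows essentially the same route as the paper's own proof: the resolvent difference $(\mu-T(\phi))^{-1}-T((\mu-\phi)^{-1})$ is shown to be trace class via the Hankel identity \eqref{Tab}, and the Riesz--Dunford contour integral then transfers this to general $f$. The only difference is one of detail, not of method: where the paper invokes ``a standard approximation argument,'' you spell out the uniform trace-norm bound and the continuity of $\mu\mapsto(\mu-\phi)^{-1}$ in $\B$ needed to integrate the trace-class-valued function over the contour.
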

\begin{proof}
It is well-known that if $T(\phi)$ is invertible then so is $\phi$ and
\[ T(\phi)^{-1} - T(\phi^{-1}) \]
is trace class. This follows from the fact that 
\[ T(\phi)T(\phi^{-1}) = I - H(\phi) H(\widetilde{\phi}^{-1}) \]
is the identity plus a trace class operator. 
Multiplying from the left
by $T(\phi)^{-1}$ then shows that the above difference is trace class. 
 
To arrive at the result for a general $f$ we write, using functional calculus, 
\begin{align*}
 f(T(\phi) ) - T(f(\phi) ) 
 &= \frac{1}{2\pi i} \int_{\cC} f(z)\, ( z - T(\phi))^{-1}\, dz - \frac{1}{2\pi i} T(\int_{\cC} f(z)\, ( z - \phi)^{-1}\, dz)
 \\[1ex]
&= 
\frac{1}{2\pi i} \int_{\cC} f(z) \Big( (T(z-\phi))^{-1} - T((z-\phi)^{-1})\Big)\, dz
\end{align*}
and then use a standard approximation argument in connection with the preceding observation for $z-\phi$ instead of $\phi$. Here $\cC$ is any contour containing the spectrum of $T(\phi)$ which is known to also contain the spectrum of $\phi$.
\end{proof}

Now from this simple proposition, in connection with Theorem \ref{g.th}, we can prove our general result. 

\begin{theorem}
Suppose $\phi \in \B$ and $f$ is analytic complex-valued and  analytic on an open neighborhood of the spectrum of $T(\phi)$.
Moreover, assume that the function $f(\phi)\in\B$ and the operator $T(f(\phi)^{-1})$ are invertible. Then
\[  \det P_{n} f(T(\phi) )P_{n}\, \sim G(f(\phi))^n\,\det f(T(\phi))T((f(\phi)^{-1})\ \ \ {\rm as}\ n\to\iy,\]
where the operator determinant is well-defined.
\end{theorem}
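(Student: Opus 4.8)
The plan is to recognize $f(T(\phi))$ as a trace-class perturbation of a Toeplitz operator and then invoke Theorem~\ref{g.th}. Set $\psi=f(\phi)$. Since $T(f(\phi)\iv)$ is assumed invertible, its symbol $f(\phi)\iv$ is invertible as a function, and hence so is $f(\phi)$; since $f(\phi)\in\B$, the factorization facts recalled earlier then put $f(\phi)\iv\in\B$, so $\psi$ is an invertible element of $\B$ with $T(\psi\iv)$ invertible. Thus all standing hypotheses of Theorem~\ref{g.th} hold for the symbol $\psi$. By the preceding proposition the operator
\[
K:=f(T(\phi))-T(f(\phi))=f(T(\phi))-T(\psi)
\]
is trace class, and therefore $f(T(\phi))=T(\psi)+K$.

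With these identifications I would apply Theorem~\ref{g.th} directly to the symbol $\psi$ and the trace-class operator $K$. This yields
\[
\det P_n f(T(\phi))P_n=\det P_n\big(T(\psi)+K\big)P_n\sim G(\psi)^n\,\det\big(T(\psi\iv)(T(\psi)+K)\big)\quad\text{as }n\to\iy,
\]
and substituting $\psi=f(\phi)$ together with $T(\psi)+K=f(T(\phi))$ turns the right-hand side into $G(f(\phi))^n\det\big(T(f(\phi)\iv)\,f(T(\phi))\big)$. The operator inside this determinant is of the form identity plus trace class, since $T(f(\phi)\iv)T(f(\phi))=I-H(f(\phi)\iv)H(\widetilde{f(\phi)})$ is a trace-class perturbation of $I$ and $T(f(\phi)\iv)K$ is trace class; in particular the determinant is well-defined.

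It remains only to reconcile the order of the factors with the statement, which is written as $f(T(\phi))\,T(f(\phi)\iv)$. Writing $A=T(f(\phi)\iv)$ and $B=f(T(\phi))$, one checks as above that both $AB-I$ and $BA-I$ are trace class; since $A$ is invertible, $BA=A\iv(AB)A$ is a conjugate of $AB$, and the Fredholm determinant is invariant under conjugation by a bounded invertible operator, so $\det(BA)=\det(AB)$. This gives the asserted formula, with well-definedness of the displayed determinant inherited from the previous paragraph. The argument is essentially bookkeeping once the preceding proposition is in hand; I expect the only points that genuinely need care to be the verification that $f(\phi)$ is invertible in $\B$, so that Theorem~\ref{g.th} truly applies, and the legitimacy of the factor swap inside the determinant, i.e.\ the conjugation invariance of the Fredholm determinant.
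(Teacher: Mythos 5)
Your proposal is correct and follows essentially the same route as the paper's proof: write $f(T(\phi))=T(f(\phi))+K$ with $K$ trace class via the preceding proposition, apply Theorem~\ref{g.th} to the symbol $\psi=f(\phi)$, and check that the limiting operator determinant is of the form identity plus trace class. If anything, you are slightly more careful than the paper, which passes silently from $\det\big(T(f(\phi)^{-1})\,f(T(\phi))\big)$ (the order produced by Theorem~\ref{g.th}) to $\det\big(f(T(\phi))\,T(f(\phi)^{-1})\big)$ (the order in the statement); your justification of this swap by conjugation with the invertible operator $T(f(\phi)^{-1})$ and similarity invariance of the Fredholm determinant is valid.
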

\proof
By what we have shown above,
$$
K=f(T(\phi))-T(f(\phi))
$$
is trace class. Hence we can write 
$$
\det P_n f(T(\phi)) P_n = \det P_n(T(f(\phi))+K)P_n
$$
and see that we are in the position to apply Theorem \ref{g.th} (with $f(\phi)\in\B$ instead of $\phi\in\B$).
This provides the asymptotics
$$
\det P_n (T(\phi)) P_n \sim G(f(\phi))^n \det \big(T(f(\phi)^{-1})(T(f(\phi))+K)\big), 
\quad n\to \infty,
$$
where the operator determinant takes the expression given above. 

That this operator determinant is well-defined can also be seen directly by remarking that both $K$  and 
$$L= T(f(\phi) )T(f(\phi)^{-1})-I=-H(f(\phi))H(\widetilde{f(\phi)}^{-1}) $$
are trace class operators. Therefore,
$$
f(T(\phi))T(f(\phi)\iv)=(T(f(\phi))+K)T(f(\phi\iv)=I+L+K T(f(\phi\iv))
$$
is of the form identity plus a trace class operator. 
\endproof

As an application, if $f(z) = e^{z},$ then 
 \[ \det P_{n} e^{T(\phi)}P_{n}\, \sim G(e^{\phi})^n\,\det e^{T(\phi)}T(e^{-\phi})\ \ \ {\rm as}\ n\to\iy.\] 
 
 We should point out that the constant $\det e^{ T(\phi)}T(e^{-\phi})$ is important in itself. If one can compute 
 $$\det e^{T(\phi)}T(e^{-\phi})\,\, \,\mbox{and} \,\,\,\det e^{-T(\phi)}T(e^{\phi}),$$ then one can compute 
 $$\det e^{T(\phi)}T(e^{-\phi})\,\det e^{-T(\phi)}T(e^{\phi}) = E(\psi)$$ where $\psi = e^{ \phi}.$ This idea was used in other constant computations approaches, for example see \cite{BE07}. In particular, in the above scalar case, the constant can be computed
\[   \det e^{T(\phi)}T(e^{-\phi}) = \exp \Big( \frac{1}{2} \sum_{k = 1} ^{\infty} k \phi_{k} \phi_{-k} \Big). \]
For details we refer to \cite{E03a,E03b}. Therefore, in the scalar case, the asymptotic formula
$$
\det P_n e^{T(\phi)} P_n \sim G(e^{\phi})^n\exp( \Big( \frac{1}{2} \sum_{k = 1} ^{\infty} k \phi_{k} \phi_{-k} \Big), \qquad n\to \infty
$$
is obtained for $\phi\in \B$.

 \end{document}